\numberwithin{equation}{section}
\newtheorem{theorem}{Theorem}[section]
\newtheorem{proposition}[theorem]{Proposition}
\newtheorem{corollary}[theorem]{Corollary}
\newtheorem{remark}[theorem]{Remark}
\newcommand{\R}{{\mathbb R}}
\newcommand{\refe}[1]{{(\ref{#1})}}
\newcommand{\dis}{\displaystyle}
\newcommand{\noi}{\noindent}
\newcommand{\Mp}{\mathcal{M}^+_{\lambda, \Lambda}}
\newcommand{\uog}{u^\omega_\gamma}
\newcommand{\Oog}{\Omega^\omega_\gamma}
\newcommand{\Ooga}{\Omega^\omega_{\gamma, a}}
\newcommand{\uoga}{u^\omega_{\gamma, a}}
\begin{document}
\parindent=0pt

\title[Symmetry minimizes the principal eigenvalue]{Symmetry minimizes the principal eigenvalue: an example for the Pucci's sup operator}

\author[ I. Birindelli, F. Leoni]
{ Isabeau Birindelli and Fabiana Leoni}

\address{Dipartimento di Matematica\newline
\indent Sapienza Universit\`a  di Roma \newline
 \indent   P.le Aldo  Moro 2, I--00185 Roma, Italy.}
 \email{isabeau@mat.uniroma1.it}
\email{leoni@mat.uniroma1.it}

\keywords{Pucci's extremal operators, principal eigenvalue, principal eigenfunction, symmetry}
\subjclass[2010]{35J60}
\begin{abstract}
We explicitly evaluate the principal eigenvalue of the extremal Pucci's sup--operator for a class of  plane domains, and we prove that, for fixed area, the eigenvalue is minimal for the most symmetric set.
 \end{abstract}
\maketitle

\section{Introduction}\label{intro}
In 1951, P\'olya and Szego conjectured:

{\em Of all $n$-polygons with the same area, the regular $n$-polygon has the smallest first Dirichlet eigenvalue,}

referring to the Dirichlet eigenvalue of the Laplacian. It is very simple to see 
that among all rectangles of same area, the one that minimizes the first Laplace Dirichlet 
eigenvalue is the square.  Using Steiner symmetrization, P\'olya and Szego proved the conjecture for $n=2$ and $n=3$,  
but it is still an open problem for $n>4$. On the other hand, the well known Faber-Krahn's inequality, affirms  that in any dimension,
among  all domains of same volume, the euclidean ball has the smallest first Laplace Dirichlet eigenvalue.

The notion of the first Dirichlet eigenvalue for linear elliptic operators, has been extended to fully nonlinear ones (see \cite{BD,BEQ,IY}). 
Indeed, for linear operators, Berestycki, Nirenberg and Varadhan in \cite{BNV} 
use the maximum principle to define the principal eigenvalue. So, following their idea it is possible to prove that,
if $\Mp$ denotes the Pucci's  supremum operator, with ellipticity constants $0<\lambda\leq\Lambda$ and 
if $\Omega$ is a bounded Lipschitz domain, then there exists $\phi>0$ in $\Omega$ such that
$$
\left\{\begin{array}{lc}
  \Mp(D^2 \phi)+\mu^+(\Omega) \phi = 0 & \mbox{in}\ \Omega\\
  \phi=0& \mbox{on}\ \partial\Omega
\end{array}
\right.
$$
for 
$$\mu^+(\Omega)=\sup\{\mu\in\R\, :\  \exists\ \phi>0 \ \mbox{in}\ \Omega,\  \Mp(D^2 \phi)+\mu \phi \leq 0\quad\mbox{in}\ \Omega\}.$$
For 
$$\mu^-(\Omega)=\sup\{\mu\in\R\, :\ \exists\ \phi<0 \ \mbox{in}\ \Omega,\  \Mp(D^2 \phi)+\mu \phi \geq 0\quad\mbox{in}\ \Omega\}$$
the existence of a negative eigenfunction is similarly proved.

It is hence quite natural, to wonder if the Faber-Krahn inequality is valid for these "eigenvalues" associated 
to $\Mp$; precisely, given a ball $B$ is it true that 
\begin{equation}\label{FK}
\mu^+(B)\leq\mu^+(\Omega), \ \mbox{for any}\ \Omega \ \mbox{such that}\  |\Omega|=|B|?  
\end{equation}
Here $|.|$ indicates the volume.

Faber-Krahn inequality is proved in several ways, the most classical one uses Steiner symmetrization 
together with the Rayleigh quotient that defines the eigenvalue. Clearly these tools are not at all adapted to this non variational
fully nonlinear setting.
Another possible proof relies on a more geometrical understanding of the problem;
as it is well-explained in \cite{SI},  a domain $\Omega$ is critical  for the Laplace first eigenvalue functional  under fixed volume variation, if and only if  the eigenfunction $\phi>0$ associated to $\mu(\Omega)$ has 
constant Neumann boundary condition i.e. if it is a solution of an 
overdetermined boundary value problem. This is proved using Hadamard's identity (we refer to \cite{SI} and references therein). But, by
Serrin's classical result, the only bounded domains which admit non trivial solutions satisfying overdetermined boundary conditions
are balls.
In \cite{BDov}, it is proved that at least for $\lambda$ and $\Lambda$ close enough, the only bounded domains for which 
the overdetermined boundary value problem associated to $\Mp$ admits a non trivial solution are the balls. 
This suggests that (\ref{FK}) may be true.
Unfortunately,  it is not known if, for  the eigenvalue functional associated to $\Mp$, the critical domains under fixed volume have  
eigenfunctions with constant normal derivative.

Both the Faber-Krahn inequality and the P\'olya and Szego conjecture state that symmetry of the domain decreases 
the Laplace first eigenvalue. 
If this is true for the Pucci eigenvalue  is not known but the scope of this paper is to show that among a family of subsets of 
$\R^2$ of same area, which are in some sense deformations of rectangles, 
the one that minimizes $\mu^+(\cdot)$ is the most symmetric one. This minimal domain will be denoted 
$\Omega^\omega_1$ for $\omega=\frac{\Lambda}{\lambda}$ and it is, somehow, a deformation of a square. 
The result is accomplished by explicitly computing the eigenvalue $\mu^+(\Omega^\omega_1)$ and the corresponding eigenfunction. 
Observe that the square is not the good set to consider, since, as it 
is proved in Proposition \ref{rect}, the eigenfunction associated to the square is not the product of 
two functions of one variable.

Remarkably, an analogous explicit computation of $\mu^-(\cdot)$  leads to  unbounded sets. In particular,
one can construct a symmetric unbounded set $D^\omega_1$ such that $\mu^-(D^\omega_1)=\lambda$.

\section{The principal eigenvalue of $\Mp$ in some special domains}
In order to fix notations, we recall that the supremum Pucci operator is defined by
$$\dis \Mp(X)=\lambda \sum_{e_i<0} e_i+\Lambda \sum_{e_i>0} e_i$$
where $e_i$ are the eigenvalues of the symmetric matrix $X$ and $\Lambda \geq \lambda>0$ are fixed constants.
The starting point of our analysis is the following observation.

\begin{proposition}\label{rect}
For $\Lambda >\lambda>0$, any eigenfunction of $\Mp$ associated with the positive principal eigenvalue in any squared  domain $Q\subset \R^2$ is not a function of separable variables.
\end{proposition}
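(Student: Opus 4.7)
The argument is by contradiction: suppose the principal eigenfunction on some square $Q\subset\R^2$ is separable, $\phi(x,y)=f(x)g(y)>0$, and (after translation) take $Q=(-a/2,a/2)^2$. Since the principal eigenpair is unique up to a positive multiple and both $Q$ and $\Mp$ are invariant under the symmetries of the square, the eigenfunction is invariant under that group. For a separable function this forces $f=g$ (up to a constant) and $f$ even, so I may write $\phi(x,y)=f(x)f(y)$ with $f>0$ on $(-a/2,a/2)$, $f(\pm a/2)=0$, $f'(0)=0$, and $f''(0)<0$.

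Next I exploit the axis $y=0$. There $\phi_{xy}(x,0)=f'(x)f'(0)=0$, so $D^2\phi(x,0)$ is diagonal with eigenvalues $f''(x)f(0)$ and $f(x)f''(0)$, the latter always negative. In a neighborhood of $0$ both are negative, whence the eigenvalue equation reads
\[
\lambda\bigl(f''(x)f(0)+f(x)f''(0)\bigr)+\mu^+(Q)\,f(x)f(0)=0,
\]
which, after dividing by $f(0)$, becomes the linear ODE $\lambda f''+\tilde\mu f=0$ with $\tilde\mu:=\mu^+(Q)+\lambda f''(0)/f(0)>0$. Let $x^*\in(0,a/2]$ be the supremum of those $x$ for which $f''<0$ on $[0,x)$. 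On $[0,x^*)$ the ODE forces $f(x)=C\cos(\sqrt{\tilde\mu/\lambda}\,x)$; if $x^*<a/2$, continuity and the ODE give $f''(x^*)=0$ and hence $f(x^*)=0$, contradicting $f>0$ inside $(-a/2,a/2)$. Hence $x^*=a/2$, and the boundary condition $f(a/2)=0$ pins down $\sqrt{\tilde\mu/\lambda}=\pi/a$, so $f(x)=C\cos(\pi x/a)$.

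Finally I look off the axes. A direct computation for $\phi(x,y)=C^2\cos(\pi x/a)\cos(\pi y/a)$ yields the eigenvalues $-C^2(\pi/a)^2\cos(\pi(x\pm y)/a)$ of $D^2\phi$. On the diagonal $y=x$ with $a/4<x<a/2$ these have opposite signs, so
\[
\Mp(D^2\phi)=-C^2(\pi/a)^2\bigl[\Lambda\cos(2\pi x/a)+\lambda\bigr].
\]
Substituting into $\Mp(D^2\phi)+\mu^+(Q)\,\phi=0$ and using $\cos^2(\pi x/a)=(1+\cos(2\pi x/a))/2$ gives
\[
\bigl[\mu^+(Q)/2-\Lambda(\pi/a)^2\bigr]\cos(2\pi x/a)+\bigl[\mu^+(Q)/2-\lambda(\pi/a)^2\bigr]=0,
\]
and the linear independence of $1$ and $\cos(2\pi x/a)$ on this interval forces $\mu^+(Q)=2\Lambda(\pi/a)^2=2\lambda(\pi/a)^2$, so $\Lambda=\lambda$, contradicting the hypothesis $\Lambda>\lambda$.

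The delicate step is the axial analysis: identifying $f$ as a pure cosine through the sign-chasing ODE argument is what makes the final diagonal computation decisive; without it, the product structure alone does not obviously lead to a contradiction.
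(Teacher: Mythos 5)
Your proof is correct and follows essentially the same route as the paper: reduce to $\phi=f(x)f(y)$ with $f$ even, show that on an axis the Hessian is diagonal with both eigenvalues negative so that $f$ solves a linear ODE and is a cosine, and then compute the Hessian along the diagonal $y=x$ near a corner to force $\Lambda=\lambda$. The only differences are cosmetic — you work on a general square $(-a/2,a/2)^2$ and handle the step ``$f''<0$ everywhere'' by a continuation/first-touching argument at $x^*$, whereas the paper evaluates the equation directly at a hypothetical point where $f''$ vanishes and reads off a contradiction between $\mu$ and $\mu/2$.
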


\begin{proof} Let $Q=\left( -\frac{\pi}{\sqrt{2}},\frac{\pi}{\sqrt{2}}\right)^2$ and let $u(x,y)$ be the principal eigenfunction of $\Mp$ in $Q$ associated with the positive principal eigenvalue $\mu>0$, that is
\begin{equation}\label{eigen}
\left\{ \begin{array}{l}
-\Mp (D^2u)= \mu \, u\quad \hbox{in }\ Q\, ,\\[2ex]
u>0 \quad \hbox{in }  Q\, ,\ u=0 \quad \hbox{on } \partial Q\,.
\end{array} \right.
\end{equation}
Assume, by contradiction, that $u$ is a function of separable variables. Then, by symmetry and regularity results, $u$ can be written as 
$$
u(x,y)= f(x)\, f(y)
$$
with $f:\left( -\frac{\pi}{\sqrt{2}},\frac{\pi}{\sqrt{2}}\right)\to \R$ smooth, positive, even, and, up to a normalization, satisfying  $f(0)=1$.  In particular, one has
$$
D^2u(0,y)=\left( \begin{array}{cc}
f^{''}(0)f(y) & 0\\
0 & f^{''}(y) \end{array}\right)
$$
and equation \refe{eigen} tested at $(0,0)$ yields
$$
f^{''}(0)=- \frac{\mu}{2\lambda } <0\, .
$$
Moreover, if  for some  $y_0\in \left( -\frac{\pi}{\sqrt{2}},\frac{\pi}{\sqrt{2}}\right)$ one has $f^{''}(y_0)=0$, then from equation \refe{eigen} written for $(x,y)=(0,y_0)$ we obtain the contradiction
$$
-\lambda\, f^{''}(0) = \mu =-2 \lambda\, f^{''}(0)\, .
$$
Therefore, we have $f^{''}<0$ in $\left(- \frac{\pi}{\sqrt{2}},\frac{\pi}{\sqrt{2}}\right)$ and, again from equation \refe{eigen}, we deduce that $f$ satisfies
$$
\left\{
\begin{array}{l}
f^{''}=-\frac{\mu}{2 \lambda} \, f\, ,\quad f>0 \quad \hbox{in } \left( -\frac{\pi}{\sqrt{2}},\frac{\pi}{\sqrt{2}}\right)\\[2ex]
f\left(-\frac{\pi}{\sqrt{2}}\right)=f\left(\frac{\pi}{\sqrt{2}}\right)=0\, ,\ f(0)=1
\end{array}
\right.
$$
Hence, $\mu=  \lambda$ and $f(x)=\cos \left(\frac{x}{\sqrt{2}}\right)$. On the other hand, for the function $u(x,y)=\cos \left(\frac{x}{\sqrt{2}}\right)\, \cos \left(\frac{y}{\sqrt{2}}\right)$ one has, in particular,
$$
D^2u (x,x)=
\frac{1}{2}\left( \begin{array}{cc}
-\cos^2\left(\frac{x}{\sqrt{2}}\right) &  \sin^2\left(\frac{x}{\sqrt{2}}\right) \\
\sin^2\left(\frac{x}{\sqrt{2}}\right) & -\cos^2\left(\frac{x}{\sqrt{2}}\right) \end{array}\right)\, ,
$$
and, for $\frac{\pi}{2\sqrt{2}}\leq |x|< \frac{\pi}{\sqrt{2}}$ we have
$$
-\Mp(D^2u (x,x))= \Lambda \cos^2\left(\frac{x}{\sqrt{2}}\right) +\frac{\Lambda -\lambda}{2} \neq  \lambda \, u(x,x)\, ,
$$
unless $\Lambda =\lambda$.
\end{proof}

Let us remark that the function 
$$
u(x,y)=\cos \left(\frac{x}{\sqrt{2}}\right)\, \cos \left(\frac{y}{\sqrt{2}}\right)=\frac{1}{2} \left[ \cos\left( \frac{x+y}{\sqrt{2}}\right) +\cos\left( \frac{x-y}{\sqrt{2}}\right)\right]
$$
 is an eigenfunction for the Laplace operator in the squared domain $Q$ relative to the first eigenvalue $\lambda_1(-\Delta, Q)=1$. As long as $u$ is concave, it also satisfies the equation
$$
-\Mp (D^2u)=-\lambda \, \Delta u= \lambda\, u\, .
$$
Actually this is the case for $(x,y)\in Q_1=\{ (x,y)\in \R^2\, : \, |x|+|y|< \frac{\pi}{\sqrt{2}}\}$, the rotated squared domain with side $\pi$. Moreover, the same holds true  for any function of the form
$$
u_\gamma (x,y)= \gamma\,   \cos\left( \frac{x+y}{\sqrt{2}}\right) +\cos\left( \frac{x-y}{\sqrt{2}}\right)\, ,
$$
with $\gamma>0$. In the next result we suitably extend the function $u_\gamma |_{Q_1}$ in order to obtain an eigenfunction for $\Mp$ relative to the eigenvalue $\lambda$.
\medskip

Let $\omega \geq 1$ be a parameter to be fixed in the sequel, and, for $\frac{1}{\sqrt{\omega}}\leq \gamma\leq \sqrt{\omega}$ let us introduce the positive even functions defined for $|x|\leq \frac{\pi}{2}+\sqrt{\omega} \arcsin \left(\frac{1}{\gamma \sqrt{\omega}}\right)$ as
$$
\phi^\omega_\gamma (x)=\left\{ 
\begin{array}{ll}
\frac{\pi}{2} +  \sqrt{\omega} \arcsin \left(\frac{\gamma}{\sqrt{\omega}}\cos x\right)  &  \hbox{if } |x|\leq \frac{\pi}{2}\\[2ex]
\arccos \left(\gamma \sqrt{\omega} \sin \left( \frac{|x|-\pi/2}{\sqrt{\omega}} \right)\right) & \hbox{if }  \frac{\pi}{2}<|x|\leq \frac{\pi}{2}+\sqrt{\omega} \arcsin \left(\frac{1}{\gamma \sqrt{\omega}}\right)
\end{array}
\right.
$$
Note that
\begin{equation}\label{fiog}
\phi^\omega_{\gamma^{-1}}= \left(\phi^\omega_\gamma\right)^{-1}
\end{equation}
so that, in particular,  $\phi^\omega_1=\left(\phi^\omega_1\right)^{-1}$.

\begin{figure}
\includegraphics[height=40mm]{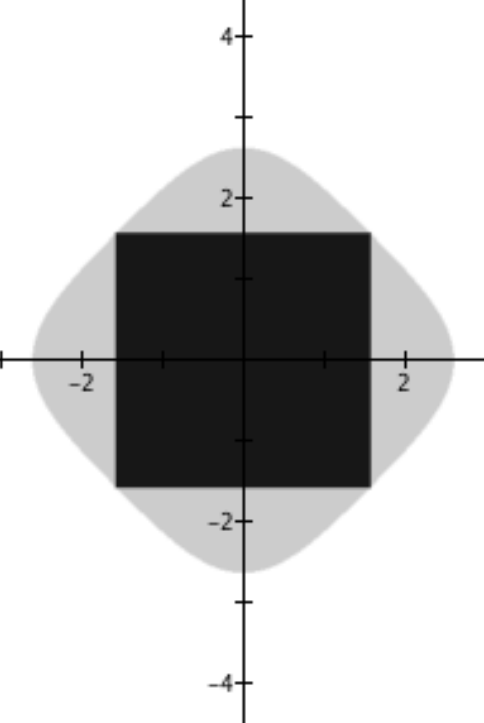}
\includegraphics[height=40mm]{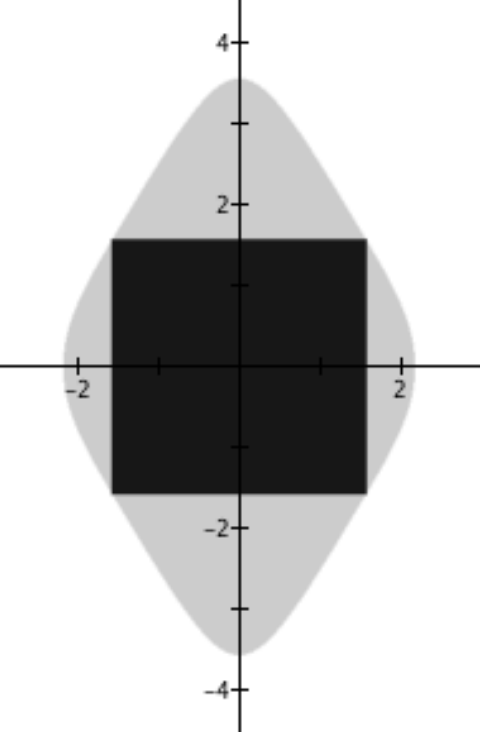}
\includegraphics[height=40mm]{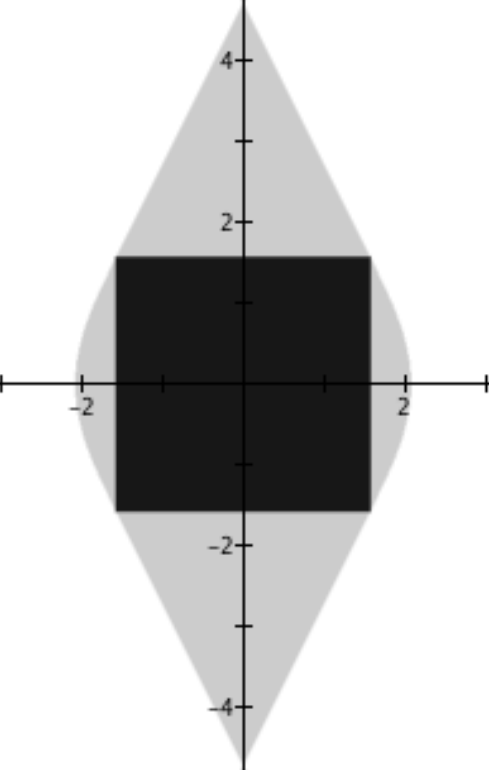}
\caption{$\Omega^\omega_1$,  $\Omega^\omega_{\gamma}$, $\Omega^\omega_{\sqrt{\omega}}$, three domains for which the eigenvalue is $\lambda$; in the black square $u_\gamma^\omega$ is concave.}
\end{figure}

Next, let us consider the open bounded subsets
$$
\Omega^\omega_\gamma \,: = \left\{ (x,y)\in \R^2\, : \, |y|< \phi^\omega_\gamma (x)\right\}\, .
$$
Note that for $\omega =1$ we have $\gamma=1$ and $\Omega^1_1$ is nothing but the rotated squared domain  with side $\sqrt{2} \pi$.
In general, $\Omega^\omega_\gamma$ is  a Lipschitz domain symmetric both with respect to the $x$ and $y$ axes, and, by \refe{fiog}, 
$$
\Omega^\omega_{\frac{1}{\gamma}}=\left\{ (x,y)\in \R^2\, :\, (y,x)\in \Omega^\omega_\gamma \right\}\, .
$$
In particular, $\Omega^\omega_1$ is symmetric also with respect to the diagonal $y=x$.

%\newpage
\begin{theorem} \label{simm} Given $\Lambda\geq \lambda>0$ let us set $\omega =\frac{\Lambda}{\lambda}\geq1$. Then, for any $\frac{1}{\sqrt{\omega}}\leq \gamma\leq \sqrt{\omega}$, the positive principal eigenvalue of $\Mp$ in the domain $\Omega^\omega_\gamma$ is
$$
\mu \left( \Omega^\omega_\gamma\right) = \lambda
$$
and the principal  eigenfunction is, up to positive constants,
$$
u^\omega_\gamma(x,y)=\left\{
\begin{array}{ll}
\gamma\,  \cos x +\cos y & \hbox{if } |x|\leq \frac{\pi}{2}\,,\  |y|\leq \frac{\pi}{2}\\[2ex]
\gamma \, \sqrt{\omega} \cos \left( \frac{|x|-\pi/2}{\sqrt{\omega}}+\frac{\pi}{2}\right) + \cos y    & \hbox{if } (x,y)\in \Omega^\omega_\gamma\,,\    |x|\geq \frac{\pi}{2}\\[2ex]
\gamma\,  \cos x  +\sqrt{\omega} \cos \left( \frac{|y|-\pi/2}{\sqrt{\omega}}+ \frac{\pi}{2}\right)  & \hbox{if } (x,y)\in \Omega^\omega_\gamma\,,\    |y|\geq \frac{\pi}{2}  \end{array} \right.
 $$
 \end{theorem}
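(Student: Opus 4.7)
The plan is to verify that the explicitly given $u^\omega_\gamma$ is a positive classical $C^2$ solution of $-\Mp(D^2 u) = \lambda u$ in $\Oog$ with $u=0$ on $\partial \Oog$. Once established, the simplicity of the principal eigenvalue (see \cite{BD}) implies that the existence of such a positive eigenfunction forces $\mu(\Oog) = \lambda$ and identifies $\uog$ as the principal eigenfunction.

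The first step is to check boundary conditions and regularity. Plugging the boundary relation $|y| = \fiog(x)$ into the third piece and using the identity $\cos(\arccos t) = t$ gives $\uog(x,y) = \gamma \cos x - \sqrt{\omega}\sin(\arcsin(\gamma \cos x/\sqrt\omega)) = 0$; a symmetric computation works on the piece with $|x|>\pi/2$. Positivity in the interior and continuity across the interfaces $|x|=\pi/2$ and $|y|=\pi/2$ are direct verifications. More importantly, matching the second derivatives at these interfaces needs to be checked: at $|x|=\pi/2$ both the inner and outer formulas yield $u_{xx}=0$ and $u_{xy}=0$, with $u_{yy}=-\cos y$ on both sides, so $\uog\in C^2(\Oog)$.

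The key step is the pointwise verification of the PDE in the three open regions. In the inner square $|x|,|y|<\pi/2$, $D^2 \uog$ is diagonal with $u_{xx}=-\gamma\cos x<0$ and $u_{yy}=-\cos y<0$, so both eigenvalues are negative and $\Mp(D^2\uog)=\lambda(u_{xx}+u_{yy})=-\lambda \uog$. In the region $|x|>\pi/2$ we note first that $(x,y)\in\Oog$ forces $|y|\le \fiog(x)\le \pi/2$ (since $\fiog$ is defined through $\arccos$ there). Setting $\alpha=(|x|-\pi/2)/\sqrt{\omega}+\pi/2\in[\pi/2,\pi)$, a direct differentiation gives the diagonal Hessian with $u_{xx}=-(\gamma/\sqrt{\omega})\cos\alpha\ge 0$ and $u_{yy}=-\cos y\le 0$, so now $\Mp(D^2\uog)=\Lambda u_{xx}+\lambda u_{yy}$, and using $\Lambda=\lambda\omega$ this simplifies exactly to $-\lambda\uog$. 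The third region $|y|>\pi/2$ is obtained from the second by the reflection $(x,y)\mapsto(y,x)$ combined with the relation $\phi^\omega_{\gamma^{-1}}=(\phi^\omega_\gamma)^{-1}$ recorded in \refe{fiog}.

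The main obstacle, beyond bookkeeping, is ensuring the correct sign of each Hessian eigenvalue in each region so that the appropriate extremal form of $\Mp$ (either $\lambda\,\mathrm{tr}$ or $\Lambda u_{xx}+\lambda u_{yy}$) applies. This is exactly why the functions $\fiog$ and the outer pieces of $\uog$ are rescaled by $\sqrt\omega$: inside $[-\pi/2,\pi/2]^2$ the concavity of $\uog$ makes $\Mp$ reduce to $\lambda\Delta$ (as in the introductory remark preceding the theorem), while outside, in the "flaps'' of $\Oog$, the chosen rescaling produces $u_{xx}\ge 0$ with the precise coefficient needed for $\Lambda u_{xx}$ to combine with $\lambda u_{yy}$ and again yield $-\lambda\uog$. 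Once these algebraic identities are verified, $\uog$ is a classical positive eigenfunction with eigenvalue $\lambda$, and simplicity of the principal eigenvalue closes the proof.
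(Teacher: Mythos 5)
Your proposal is correct and follows essentially the same approach as the paper: a direct pointwise computation of $D^2\uog$ in each of the three regions, verification of the sign of the Hessian eigenvalues so that $\Mp$ reduces to the appropriate linear expression, and the observation that this produces a positive eigenfunction with eigenvalue $\lambda$. Your version is slightly more explicit about the $C^2$ matching across the interfaces $|x|=\pi/2$, $|y|=\pi/2$ and about invoking simplicity of the principal eigenvalue to conclude $\mu(\Oog)=\lambda$, both of which the paper leaves implicit, but these are elaborations rather than a different method.
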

 
 \begin{proof}
 The proof is a straightforward computation. We observe that $\uog$ is smooth and positive in $\Oog$, and it vanishes on $\partial \Oog$. For $|x|\leq \frac{\pi}{2}\,,\  |y|\leq \frac{\pi}{2}$ one has
 $$
 D^2\uog (x,y)= \left( 
 \begin{array}{cc}
 -\gamma \, \cos x & 0\\
 0 & -\cos y
 \end{array}\right)
 $$
 Therefore, for $|x|\leq \frac{\pi}{2}$ and  $|y|\leq \frac{\pi}{2}$,  $\uog$ is concave and it satisfies
 $$
 -\Mp \left( D^2\uog\right) =-\lambda\, \Delta \uog =\lambda\, \uog\, .
 $$
 For $(x,y)\in \Omega^\omega_\gamma$ and  $|x|\geq \frac{\pi}{2}$, one has
 $$
 D^2\uog (x,y)= \left( 
 \begin{array}{cc}
\frac{\gamma}{\sqrt{\omega}} \sin \left( \frac{|x|-\pi/2}{\sqrt{\omega}}\right)& 0\\
 0 & -\cos y
 \end{array}\right)
 $$
Note that, if $(x,y)\in \Omega^\omega_\gamma$ and  $|x|\geq \frac{\pi}{2}$, then $|y|\leq \frac{\pi}{2}$ and $0\leq \frac{|x|-\pi/2}{\sqrt{\omega}}<\arcsin \left( \frac{1}{\gamma \sqrt{\omega}}\right) \leq \frac{\pi}{2}$; therefore
$$
-\Mp \left( D^2\uog\right) =\lambda \, \cos y-\Lambda\, \frac{\gamma}{\sqrt{\omega}} \sin \left( \frac{|x|-\pi/2}{\sqrt{\omega}}\right)= \lambda\, \uog\, .
$$
Analogously, for $(x,y)\in \Omega^\omega_\gamma$ and  $|y|\geq \frac{\pi}{2}$, we have
$$
 D^2\uog (x,y)= \left( 
 \begin{array}{cc}
- \gamma\, \cos x & 0\\
 0 & \frac{1}{\sqrt{\omega}} \sin \left( \frac{|y|-\pi/2}{\sqrt{\omega}}\right)
 \end{array}\right)
 $$
and, since $|x|\leq \frac{\pi}{2}$ and $0\leq \frac{|y|-\pi/2}{\sqrt{\omega}}<\arcsin \left( \frac{\gamma}{ \sqrt{\omega}}\right) \leq \frac{\pi}{2}$,  we again conclude
$$
-\Mp \left( D^2\uog\right) =\lambda \, \gamma\, \cos x- \frac{\Lambda}{\sqrt{\omega}} \sin \left( \frac{|y|-\pi/2}{\sqrt{\omega}}\right)= \lambda\, \uog\, .
$$ \end{proof}
 
 \begin{remark}
  {\rm 
 Let us remark that for  $\omega=1$ the only admissible value for $\gamma$ is $\gamma=1$ and there is only one set $\Omega^1_1$.
 In this case, up to a rotation, $\Omega^1_1$ is the square $\{ |x|< \pi/\sqrt{2}\, ,\ |y|<\pi/\sqrt{2}\}$ and $u^1_1(x,y)=\cos \left( \frac{x}{\sqrt{2}}\right)\, \cos \left( \frac{y}{\sqrt{2}}\right)$ is the first eigenfunction of the Laplace operator, associated with the first eigenvalue $\lambda_1 = \mu \left( -\Delta, \Omega^1_1\right)=1$.
 
 For  $\omega>1$, we have identified the family of  bounded domains $\Oog$, $\frac{1}{\sqrt{\omega}}\leq  \gamma\leq \sqrt{\omega}$,  in all of which the positive principal eigenvalue of $\Mp$ is $\lambda$. Note that $\Oog$ is a smooth set except for $\gamma=\sqrt{\omega}$ and the symmetric case  $\gamma=1/\sqrt{\omega}$. $\partial \Omega^\omega_{\sqrt{\omega}}$ has  singularity  points  at $ \left(0, \pm (1+\sqrt{\omega}) \frac{\pi}{2}\right)$, where an angle of amplitude  $2\arctan \left(\frac{1}{\sqrt{\omega}}\right)$ occurs (see Figure 1). Moreover, for $(x,y)\in \Omega^\omega_{\sqrt{\omega}}\cap \{ |y|>\pi/2\}$, the eigenfunction $u^\omega_{\sqrt{\omega}}$ has the expression
 $$
 u^\omega_{\sqrt{\omega}}(x,y)=2\cos \left( \frac{ \frac{|y|-\pi/2}{\sqrt{\omega}}+\frac{\pi}{2}+x}{2}\right)\, \cos \left( \frac{ \frac{|y|-\pi/2}{\sqrt{\omega}}+\frac{\pi}{2}-x}{2}\right)
 $$
 showing that $u^\omega_{\sqrt{\omega}}(x,y)$ vanishes quadratically as $\Omega^\omega_{\sqrt{\omega}}\ni (x,y)\to \left( 0,\pm (1+\sqrt{\omega}) \frac{\pi}{2}\right)$. This property is consistent with the fact that the homogeneous problem
 \begin{equation}\label{omo}
 \left\{ \begin{array}{c}
 \Mp(D^2\Phi) =0\qquad \hbox{in } \mathcal{C}\\[1ex]
 \Phi =0 \qquad \hbox{on } \partial \mathcal{C}
 \end{array}\right.
 \end{equation}
 where $\mathcal{C}$ is the plane cone  $\mathcal{C}=\{ y> \sqrt{\omega}|x|\}$, has the  positive, degree 2 homogeneous solution $\Phi (x,y)=y^2-\omega\, x^2$ (see \cite{L}).  Indeed, by the comparison principle, it immediately follows that
 $$
\liminf_{\Omega^\omega_{\sqrt{\omega}}\ni (x,y)\to \left(0, \pm (1+\sqrt{\omega}) \frac{\pi}{2}\right)} \frac{u^\omega_{\sqrt{\omega}}(x,y)}
{\Phi\left( x,(1+\sqrt{\omega})\frac{\pi}{2}\mp y\right)} >0\, .
$$ 
}
 \end{remark}
\bigskip

\begin{remark}{\rm  The function $u^\omega_\gamma$  can be extended in order to obtain a changing sign eigenfunction for $\Mp$ in the whole $\R^2$.
Precisely, for any $\gamma>0$, let us define in the square $\left\{ |x|\, ,\ |y| \leq (1+\sqrt{\omega})\frac{\pi}{2}\right\}$
$$
\uog (x,y)=\left\{
\begin{array}{ll}
 \gamma\,  \cos x +\cos y &  \hbox{if } |x|\,,\  |y|\leq \frac{\pi}{2}\\[2ex]
-\gamma\, \sqrt{\omega} \sin \left( \frac{|x|-\pi/2}{\sqrt{\omega}}\right) +\cos y   &  \hbox{if }    \frac{\pi}{2}< |x|\leq (1+\sqrt{\omega})\frac{\pi}{2}\, ,\ |y|\leq \frac{\pi}{2}\\[3ex]
\gamma\,  \cos x  -\sqrt{\omega} \sin \left( \frac{|y|-\pi/2}{\sqrt{\omega}} \right)  &  \hbox{if }    |x| \leq \frac{\pi}{2}\, ,\ \frac{\pi}{2}< |y|\leq (1+\sqrt{\omega})\frac{\pi}{2}\\[3ex]
-\sqrt{\omega}\left( \gamma  \sin \left( \frac{|x|-\pi/2}{\sqrt{\omega}}\right) +\sin \left( \frac{|y|-\pi/2}{\sqrt{\omega}} \right)\right) &  \hbox{if }  \frac{\pi}{2}< |x|\,,\  |y| \leq (1+\sqrt{\omega})\frac{\pi}{2} 
\end{array} \right.
 $$
and extend $\uog$ periodically both with respect to $x$ and $y$. Then, by arguing as in Theorem \ref{simm}, it is easy to see that 
$$
\Mp (D^2\uog )+\lambda\, u =0 \qquad \hbox{in } \R^2\, .
$$
The set where $\uog$ is positive has bounded connected components if and only if $\frac{1}{\sqrt{\omega}}\leq \gamma \leq \sqrt{\omega}$, and in this case they are nothing but  translations of $\Oog$. Conversely, the connected components of the set 
$D^\omega_\gamma =\{ \uog<0\}$ are unbounded for any $\gamma >0$. For $\frac{1}{\sqrt{\omega}}< \gamma < \sqrt{\omega}$ $D^\omega_\gamma$ is connected and unbounded in both $x$ and $y$ direction, whereas either for $\gamma \leq \frac{1}{\sqrt{\omega}}$ or for $\gamma\geq \sqrt{\omega}$ the connected components of $D^\omega_\gamma$ are contained in unbounded respectively horizontal or vertical stripes, see Figure 2. Since $\uog$ is a negative eigenfunction for $\Mp$ in each of the connected components of $D^\omega_\gamma$, we can say that for these sets one has $\mu^-=\lambda$. We finally remark that this construction does not yield a changing sign eigenfunction for a bounded domain, so that we cannot calculate eigenvalues different from the principal ones.}
 \end{remark}
 
 \begin{figure}
 \includegraphics[height=40mm]{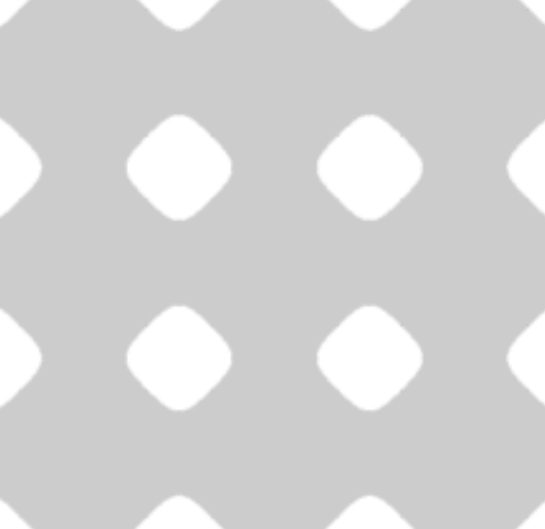} \includegraphics[height=40mm]{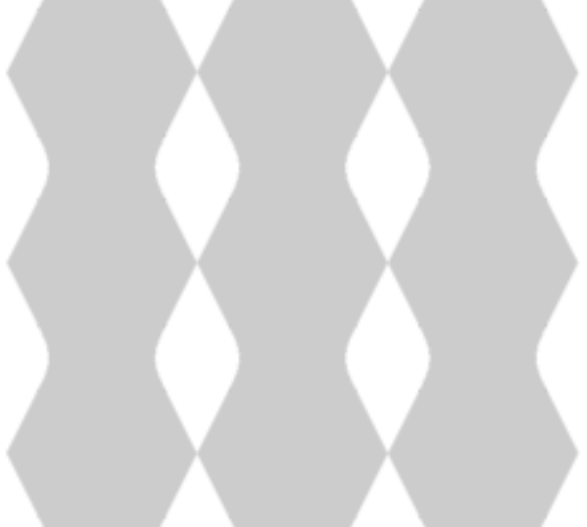}\includegraphics[height=40mm]{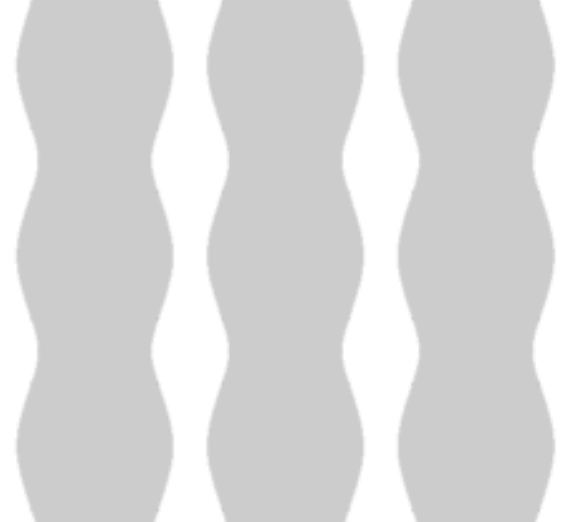}
\caption{${ D}^{\omega}_1$,  ${ D}^{\omega}_{\sqrt{\omega}}$ and ${ D}^{\omega}_{\gamma}$ for $\gamma >\sqrt{\omega}$.}
\end{figure}
\bigskip\bigskip

\noi   Let us now enlarge, by deforming the sets $\Oog$,  the class of domains for which we can evaluate the positive principal eigenvalue of $\Mp$. For any $a\in \R$ with $|a|<\pi$ let us consider the non singular matrix
  $$
  C_a= \left( 
 \begin{array}{cc}
\sqrt{1 -\left(\frac{a}{\pi}\right)^2}  & 0\\[2ex]
 \frac{a}{\pi} & 1
 \end{array}\right)
 $$
and let us denote by $C_a:\R^2\to \R^2$ also the linear transformation induced by $C_a$. We observe that $C_a$ maps the square $Q=\{ |x|+|y|<\pi\}$ with side $\sqrt{2}\pi$ into the rectangle $R=\left\{ |x| +\left|  \frac{\sqrt{\pi^2-a^2}y-a\,x}{\pi}\right|<\sqrt{\pi^2-a^2}\right\}$ with sides $\sqrt{2\pi(\pi-a)}$ and $\sqrt{2\pi(\pi+a)}$, and the square $\{ |x|\,,\ |y|<\pi/2\}$ onto the rhombus $\left\{ |x|< \frac{\sqrt{\pi^2-a^2}}{2}\, ,\ \left| y-\frac{a}{\sqrt{\pi^2-a^2}}x\right| <\frac{\pi}{2}\right\}$. Let us further set
$$
\Ooga \, := C_a \, \left( \Oog\right)
$$
and
$$
\uoga (x,y) \, : = \uog \left( C_a^{-1} (x,y)\right) \, , \quad (x,y)\in \Ooga\, ,
$$
where $\uog$ is defined in Theorem \ref{simm}, see Figure 3.
\begin{figure}
\includegraphics[height=40mm]{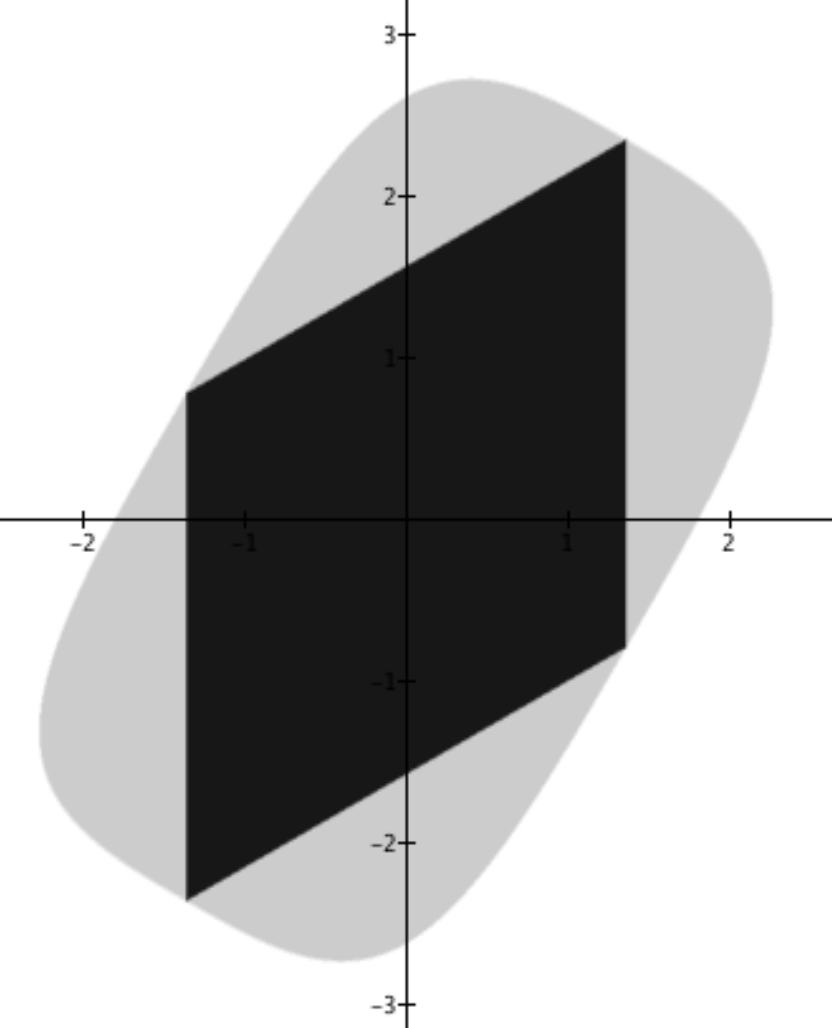}

\caption{The domain $\Ooga$; in the black part $\uoga$ is concave.}
\end{figure}
\begin{theorem}\label{nonsimm} Given $\Lambda\geq \lambda>0$ let us set $\omega =\frac{\Lambda}{\lambda}\geq1$. Then, for any $\frac{1}{\sqrt{\omega}}\leq \gamma\leq \sqrt{\omega}$ and $|a|<\pi$ the function $\uoga$ satisfies
\begin{equation}\label{super}
\left\{ \begin{array}{l}
-\Mp (D^2 \uoga ) \geq \frac{\lambda \, \pi^2}{\pi^2-a^2} \uoga \quad \hbox{in } \Ooga\\[2ex]
\uoga >0\ \hbox{in } \Ooga\, ,\ \uoga =0\ \hbox{on } \partial \Ooga
\end{array}\right.
\end{equation}
As a consequence, the positive principal eigenvalue of $\Mp$ in $\Ooga$ satisfies
\begin{equation}\label{lb}
\mu \left( \Ooga \right) \geq \frac{\lambda\, \pi^2}{\pi^2-a^2}\, ,
\end{equation}
and equality holds if and only if either $\omega=1$ or $a=0$.
 \end{theorem}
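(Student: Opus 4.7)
The plan is to pull the problem back to $\Oog$ via the linear change of variables $(x',y'):=C_a^{-1}(x,y)$, derive how $\Mp$ of the Hessian transforms, and compare it to $-c^2\lambda\uoga$, where $c^2:=\frac{\pi^2}{\pi^2-a^2}$. Precisely, the goal is the pointwise bound
$$
\Mp(D^2\uoga)(x,y)\leq c^2\,\Mp(D^2\uog)(x',y')\qquad \text{in } \Ooga,
$$
which, combined with $-\Mp(D^2\uog)=\lambda\uog=\lambda\uoga$ from Theorem~\ref{simm}, immediately yields \refe{super} and hence, by the Berestycki--Nirenberg--Varadhan variational characterization of $\mu(\Ooga)$, the inequality \refe{lb}.

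For the computation one checks that
$$
C_a^{-1}=\begin{pmatrix} c & 0 \\ -b & 1 \end{pmatrix},\qquad b:=\tfrac{a}{\sqrt{\pi^2-a^2}},\quad c:=\tfrac{\pi}{\sqrt{\pi^2-a^2}},\quad b^2+1=c^2,
$$
so the chain rule gives $D^2\uoga(x,y)=(C_a^{-1})^T M\, C_a^{-1}$ with $M:=D^2\uog(x',y')$. By Theorem~\ref{simm}, $M$ is diagonal in each of the three sub-regions of $\Oog$, with entries $\alpha,\beta$, and a direct matrix multiplication produces the clean identities $\mathrm{tr}(D^2\uoga)=c^2(\alpha+\beta)$ and $\det(D^2\uoga)=c^2\alpha\beta$. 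In the central region, where $\uog$ is concave, $\alpha,\beta\leq 0$ so both eigenvalues of $D^2\uoga$ are non-positive, giving $\Mp(D^2\uoga)=\lambda\,c^2(\alpha+\beta)=c^2\Mp(M)$ with equality. On each of the two outer regions, $\alpha\beta\leq 0$, so $D^2\uoga$ has eigenvalues $e_\pm$ of opposite signs, computable explicitly from trace and determinant via the quadratic formula. Expanding $\Mp(D^2\uoga)-c^2\Mp(M)=\Lambda e_+ + \lambda e_- - c^2\Mp(M)$ and squaring a single inequality of the form $c\sqrt{\,\cdot\,}\leq c^2|\alpha-\beta|$ reduces the desired bound to the elementary relation $\alpha\beta(c^2-1)\leq 0$, which holds since $c\geq 1$ and $\alpha\beta\leq 0$. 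I expect this short algebraic reduction in the outer regions to be the main, and essentially only, obstacle.

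For the characterization of equality, when $\omega=1$ or $a=0$ the key inequality is everywhere an equality, so $\uoga$ is a true principal eigenfunction of $\Mp$ in $\Ooga$ and $\mu(\Ooga)=c^2\lambda=\frac{\lambda\pi^2}{\pi^2-a^2}$. Conversely, if $\omega>1$ and $a\neq 0$, then on an open subset of one of the outer regions (namely where $\alpha\beta<0$) the reduction $\alpha\beta(c^2-1)<0$ is strict, so $\uoga$ is a strict positive supersolution of $\Mp(D^2u)+c^2\lambda\,u=0$ vanishing on $\partial\Ooga$. By simplicity of the principal eigenvalue of $\Mp$ in Lipschitz domains, any positive function attaining $\mu^+$ as a supersolution must coincide, up to a positive multiplicative constant, with the actual principal eigenfunction; since $\uoga$ does not satisfy the equation with equality on the outer region, this forces $\mu(\Ooga)>c^2\lambda=\frac{\lambda\pi^2}{\pi^2-a^2}$.
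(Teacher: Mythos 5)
Your proposal is correct and follows essentially the same route as the paper: pull back via $C_a^{-1}$, use the trace and determinant identities $\mathrm{tr}(D^2\uoga)=c^2(\alpha+\beta)$, $\det(D^2\uoga)=c^2\alpha\beta$, handle the concave central region trivially, and in the outer regions reduce the bound on $\Mp(D^2\uoga)$ to a comparison of discriminants that boils down to $\alpha\beta(c^2-1)\le 0$; the paper writes the same discriminant inequality in the form $\sqrt{\alpha^2+\beta^2+2(\tfrac{2a^2}{\pi^2}-1)\alpha\beta}\le|\alpha-\beta|$, which after squaring is exactly your $4c^2\alpha\beta(c^2-1)\le 0$. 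The equality characterization also matches: both invoke the simplicity/characterization of the principal eigenvalue (BNV Corollary 2.1 / Patrizi Theorem 4.4) to conclude that a strict supersolution on an open subset forces a strict inequality on $\mu(\Ooga)$.
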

 
 \begin{proof}
 Let us compute. We  have
 $$
 D^2\uoga (x,y)= \left( C_a^{-1}\right)^t D^2\uog \left( C_a^{-1} (x,y)\right) C_a^{-1}\, ,
 $$
 with
 $$
 C_a^{-1}=\left( 
 \begin{array}{cc}
\frac{\pi}{\sqrt{\pi^2-a^2}}  & 0\\[2ex]
- \frac{a}{\sqrt{\pi^2-a^2}} & 1
 \end{array}\right)\, .
 $$
Since $D^2\uog$ is diagonal, by setting 
$$
\left\{ \begin{array}{l}
X=\frac{\pi}{\sqrt{\pi^2-a^2}}  x\\[2ex]
Y=y- \frac{a}{\sqrt{\pi^2-a^2}} x
\end{array}
\right.
$$
we then obtain
$$
D^2\uoga (x,y)=\left(
\begin{array}{cc}
\frac{\pi^2}{\pi^2-a^2} (\uog)_{xx}(X,Y) +\frac{a^2}{\pi^2-a^2}(\uog)_{yy}(X,Y) & -\frac{a}{\sqrt{\pi^2-a^2}}(\uog)_{yy}(X,Y)\\[2ex]
-\frac{a}{\sqrt{\pi^2-a^2}}(\uog)_{yy}(X,Y) & (\uog)_{yy}(X,Y)
\end{array} \right)\, .
$$
Note that, in particular,
$$
{\rm det}(D^2\uoga (x,y))= \frac{\pi^2}{\pi^2-a^2}{\rm det}(D^2\uog(X,Y))\, .
$$
Therefore, for $(x,y)\in C_a\left(  \left\{|X|\, ,\ |Y|\leq \frac{\pi}{2}\right\} \right)$, $\uoga (x,y)$ is concave like $\uog(X,Y)$ and it follows that
$$
-\Mp(D^2\uoga )=-\lambda\, \Delta \uoga =- \frac{\lambda\, \pi^2}{\pi^2-a^2} \Delta \uog= \frac{\lambda\, \pi^2}{\pi^2-a^2} \uog =\frac{\lambda\, \pi^2}{\pi^2-a^2} \uoga\, .
$$
Otherwise, for $(x,y)\in \Ooga$ such that either $|X|>\frac{\pi}{2}$ or $|Y|>\frac{\pi}{2}$, we have ${\rm det}(D^2\uoga (x,y))<0$, and, by computing the  eigenvalues of $D^2\uoga$ and recalling the expressions of $(\uoga)_{xx}$ and $(\uoga)_{yy}$ from the proof of Theorem \ref{simm}, we get
$$
\begin{array}{ll}
-\Mp (D^2\uoga ) & = -\frac{\lambda\, \pi^2}{2(\pi^2-a^2)} \left[ (\omega+1) \left( (\uog)_{xx}+(\uog)_{yy}\right) \right.\\[2ex]
& \qquad \qquad \quad \left. +(\omega -1) \sqrt{(\uog)_{xx}^2
+(\uog)_{yy}^2+2\left( \frac{2a^2}{\pi^2}-1\right) (\uog)_{xx}(\uog)_{yy}}\right]\\[2ex]
& \geq  -\frac{\lambda\, \pi^2}{2(\pi^2-a^2)} \left[ (\omega+1) \left( (\uog)_{xx}+(\uog)_{yy}\right) +(\omega -1) \left| (\uog)_{xx}-(\uog)_{yy}\right|\right]\\[2ex]
& = \frac{\lambda\, \pi^2}{(\pi^2-a^2)} \, \uoga\, ,
\end{array}
$$
and equality holds in the above if and only if either $\omega=1$ or $a=0$. Therefore, 
 $\uoga$ satisfies \refe{super}, and \refe{lb} follows immediately from the definition of the positive principal eigenvalue for  $\Mp$. Moreover, equality holds in \refe{lb} if and only if $\uoga$ is the principal eigenfunction for $\Mp$ in $\Ooga$, see Corollary 2.1 in \cite{BNV} or Theorem 4.4 in \cite{P}. Hence, equality holds in \refe{lb} if and only if either $\omega=1$ or $a=0$.
\end{proof}

As a consequence of Theorems \ref{simm} and \ref{nonsimm}, we can deduce that, among all sets $\Ooga$ and their rescaled $\delta \,\Ooga$ with $\delta>0$, for equal area the minimum of the principal eigenvalue for $\Mp$ is achieved on the most symmetric domain, that is some rescaled of $\Omega^\omega_1$. We will denote by $|\Omega|$ the area (two dimensional Lebesgue measure) of any set $\Omega\in \R^2$, and by $\mu (\Omega)$  the positive principal eigenvalue of $\Mp$ in the domain $\Omega$.

\begin{corollary}
Given $\Lambda\geq \lambda>0$,  let us set $\omega =\frac{\Lambda}{\lambda}\geq1$. Then
$$
\mu \left( \frac{\Omega^\omega_1}{\sqrt{\left| \Omega^\omega_1\right|}}\right)=\min \left\{ \mu \left( \frac{\Ooga}{\sqrt{\left| \Ooga\right|}}\right)\, :\ \frac{1}{\sqrt{\omega}}\leq \gamma\leq \sqrt{\omega}\, ,\ |a|<\pi\right\}\, .
$$
\end{corollary}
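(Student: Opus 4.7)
The plan is to split the normalized eigenvalue into the product ``eigenvalue$\times$area'' and then reduce the problem to a purely geometric area comparison, using Theorem~\ref{nonsimm} to dispose of the parameter $a$. Since $\Mp$ is $2$-homogeneous in $D^2$, we have $\mu(\delta\Omega) = \delta^{-2}\mu(\Omega)$, so normalizing to unit area gives
\begin{equation*}
\mu\!\left(\frac{\Ooga}{\sqrt{|\Ooga|}}\right) = |\Ooga|\,\mu(\Ooga).
\end{equation*}
Because $\Ooga = C_a(\Oog)$ and $\det C_a = \sqrt{1-(a/\pi)^2}$, we have $|\Ooga| = \sqrt{1-(a/\pi)^2}\,|\Oog|$. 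Combining this with the bound $\mu(\Ooga) \geq \lambda\pi^2/(\pi^2-a^2)$ of Theorem~\ref{nonsimm} yields
\begin{equation*}
|\Ooga|\,\mu(\Ooga) \;\geq\; \frac{\sqrt{1-(a/\pi)^2}\,\lambda\pi^2}{\pi^2-a^2}\,|\Oog| \;=\; \frac{\pi\,\lambda}{\sqrt{\pi^2-a^2}}\,|\Oog| \;\geq\; \lambda\,|\Oog|,
\end{equation*}
with equality throughout only when $a=0$. Since $\mu(\Omega^\omega_1)=\lambda$ by Theorem~\ref{simm}, the corollary reduces to the geometric claim
\begin{equation*}
|\Oog| \geq |\Omega^\omega_1| \qquad \text{for all } \gamma \in [1/\sqrt\omega,\sqrt\omega].
\end{equation*}

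For the area comparison I would first observe that $\phi^\omega_\gamma \geq \pi/2$ on $[-\pi/2,\pi/2]$, so the square $[-\pi/2,\pi/2]^2$ lies in $\Oog$, and the $\pm x,\pm y$ symmetries split the excess into congruent top/bottom pieces and left/right pieces. Setting
\begin{equation*}
T(\gamma) := \sqrt\omega \int_0^{\pi/2} \arcsin\!\left(\frac{\gamma\cos x}{\sqrt\omega}\right) dx
\end{equation*}
for the area of one quarter of the top/bottom excess, the total top/bottom excess equals $4T(\gamma)$. By the inverse relation \refe{fiog}, reflection through the diagonal identifies $\Oog$ with $\Omega^\omega_{1/\gamma}$ and swaps top/bottom excess with left/right excess; therefore the left/right excess of $\Oog$ equals $4T(1/\gamma)$, and
\begin{equation*}
|\Oog| = \pi^2 + 4\bigl[T(\gamma) + T(1/\gamma)\bigr].
\end{equation*}
The area inequality then amounts to showing $T(\gamma)+T(1/\gamma) \geq 2T(1)$ on $[1/\sqrt\omega,\sqrt\omega]$.

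I would establish this last bound by proving that $h(t) := T(e^t)$ is convex on $[-\tfrac12\log\omega,\tfrac12\log\omega]$, so that Jensen's inequality applied to the symmetric pair $\{t,-t\}$ yields $h(t)+h(-t)\geq 2h(0)$. Direct differentiation gives
\begin{equation*}
\gamma\,T'(\gamma) = \int_0^{\pi/2} \frac{\gamma\cos x}{\sqrt{1-\gamma^2\cos^2 x/\omega}}\, dx, \qquad \frac{d}{d\gamma}\bigl(\gamma\,T'(\gamma)\bigr) = \int_0^{\pi/2} \frac{\cos x}{\bigl(1-\gamma^2\cos^2 x/\omega\bigr)^{3/2}}\, dx,
\end{equation*}
and the latter integrand is strictly positive for $\gamma\in(0,\sqrt\omega]$. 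Since $h''(t) = \gamma\,\tfrac{d}{d\gamma}(\gamma\,T'(\gamma))$, this is exactly the required convexity.

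The genuinely non-trivial point in this plan is this last convexity calculation: everything before it is either scaling, a direct application of Theorems~\ref{simm} and \ref{nonsimm}, or the reflection symmetry \refe{fiog} of the family. I therefore expect the main obstacle to lie precisely in the monotonicity of $\gamma\,T'(\gamma)$, since that is the only step where the explicit form of $\phi^\omega_\gamma$, rather than just its symmetries, really enters the argument.
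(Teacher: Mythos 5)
Your proof is correct, and the first half — the homogeneity scaling $\mu(\Ooga/\sqrt{|\Ooga|})=|\Ooga|\,\mu(\Ooga)$, the determinant computation $|\Ooga|=\sqrt{1-(a/\pi)^2}\,|\Oog|$, the chain of inequalities using Theorem \ref{nonsimm}, and the reduction to the geometric claim $|\Oog|\geq|\Omega^\omega_1|$ — is identical to the paper's argument. Where you diverge is in establishing that last area inequality. The paper writes $|\Oog|=\pi^2+4\sqrt\omega\int_0^{\pi/2}\bigl[\arcsin(\tfrac{\gamma}{\sqrt\omega}\cos x)+\arcsin(\tfrac{1}{\gamma\sqrt\omega}\cos x)\bigr]\,dx$, differentiates once in $\gamma$, and reads off that $\tfrac{d}{d\gamma}|\Oog|\geq 0$ for $\gamma\geq1$ and $\leq 0$ for $\gamma\leq1$ by direct comparison of the two resulting kernels. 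You instead set $T(\gamma)=\sqrt\omega\int_0^{\pi/2}\arcsin(\tfrac{\gamma\cos x}{\sqrt\omega})\,dx$, note $|\Oog|=\pi^2+4[T(\gamma)+T(1/\gamma)]$ (the left/right excess equaling $4T(1/\gamma)$ follows from \refe{fiog}, and is the same Fubini/reflection identity the paper uses implicitly), and prove convexity of $h(t)=T(e^t)$ via a second-derivative computation, concluding by the symmetry $h(t)+h(-t)\geq 2h(0)$. Both routes hinge on differentiating the same $\arcsin$ integral; yours needs one extra derivative but trades the case split $\gamma\gtrless1$ for a single positivity statement, which is a slightly cleaner structural reason for the minimum at $\gamma=1$. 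Your auxiliary computations — $\gamma T'(\gamma)=\int_0^{\pi/2}\tfrac{\gamma\cos x}{\sqrt{1-\gamma^2\cos^2 x/\omega}}\,dx$, $\tfrac{d}{d\gamma}(\gamma T'(\gamma))=\int_0^{\pi/2}\tfrac{\cos x}{(1-\gamma^2\cos^2 x/\omega)^{3/2}}\,dx>0$, and $h''(t)=\gamma\tfrac{d}{d\gamma}(\gamma T'(\gamma))$ — all check out.
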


\begin{proof}
By the homogeneity of the principal eigenvalue and by Theorem \ref{nonsimm}, we have
$$
\mu \left( \frac{\Ooga}{\sqrt{\left| \Ooga\right|}}\right) = \left| \Ooga\right| \, \mu \left( \Ooga\right) \geq \frac{\lambda\, \pi^2}{\pi^2-a^2} \, \left| \Ooga\right| \, .
$$
Moreover, one has
$$
\left| \Ooga\right| =\left| C_a\left( \Oog \right)\right| =\left| {\rm det}\left( C_a\right)\right|\, \left| \Oog\right| = \frac{\sqrt{\pi^2-a^2}}{\pi}\, \left| \Oog\right| \, ,
$$
so that
$$
\mu \left( \frac{\Ooga}{\sqrt{\left| \Ooga\right|}}\right) \geq \frac{\lambda\, \pi}{\sqrt{\pi^2-a^2}}\, \left| \Oog\right|\geq \lambda\, \left| \Oog\right|\, .
$$
On the other hand, by the definition of $\Oog$, we get
$$
\left| \Oog\right| = \pi^2 +4\sqrt{\omega} \int_{0}^{\pi/2} \left[ \arcsin \left( \frac{\gamma}{\sqrt{\omega}}\cos x\right) +\arcsin \left( \frac{1}{\gamma\, \sqrt{\omega}}\cos x\right) \right]\, dx\, ;
$$
hence,
$$
\frac{d}{d \gamma} \left| \Oog\right|= \frac{4\sqrt{\omega}}{\gamma} \int_0^{\pi/2} \left[ \frac{1}{\sqrt{\frac{\omega}{\gamma^2}-\cos^2 x}}-
\frac{1}{\sqrt{\omega\, \gamma^2-\cos^2 x}}\right]\, \cos x\, dx \left\{ \begin{array}{l}
\geq 0 \quad  \hbox{for } \gamma \geq 1\\[2ex]
\leq 0 \quad  \hbox{for } \gamma \leq 1
\end{array} \right.
$$
which shows that $\left| \Oog\right|$ is minimal for $\gamma=1$. In conclusion, by using also Theorem \ref{simm}, we deduce
$$
\mu \left( \frac{\Ooga}{\sqrt{\left| \Ooga\right|}}\right) \geq \lambda\, \left| \Oog\right|\geq \lambda\, \left| \Omega^\omega_1\right| = \mu \left( \frac{\Omega^\omega_1}{\sqrt{\Omega^\omega_1}}\right)\, .
$$
\end{proof}

\end{document}